\begin{document}

\newcounter{lemma}
\newcommand{\lemma}{\par \refstepcounter{lemma}%
{\bf Лема \arabic{lemma}.}}

\newcounter{corollary}
\newcommand{\corollary}{\par \refstepcounter{corollary}%
{\bf Наслідок \arabic{corollary}.}}

\newcounter{remark}
\newcommand{\remark}{\par \refstepcounter{remark}%
{\bf Зауваження \arabic{remark}.}}

\newcounter{theorem}
\newcommand{\theorem}{\par \refstepcounter{theorem}%
{\bf Теорема \arabic{theorem}.}}

\newcounter{proposition}
\newcommand{\proposition}{\par \refstepcounter{proposition}%
{\bf Твердження \arabic{proposition}.}}

\newcounter{example}
\newcommand{\example}{\par \refstepcounter{example}%
{\bf Приклад \arabic{example}.}}

\renewcommand{\refname}{\centerline{\bf Список літератури}}

\renewcommand{\figurename}{Мал.}

\newcommand{\proof}{{\it Доведення.\,\,}}

\noindent УДК 517.5

\medskip\medskip
{\bf Є.О.~Севостьянов} (Житомирський державний університет імені
Івана Фран\-ка; Інститут прикладної математики і механіки НАН
України, м.~Слов'янськ)

{\bf В.А.~Таргонський, Н.С.~Ількевич} (Житомирський державний
університет імені Івана Фран\-ка)

\medskip\medskip
{\bf E.O.~Sevost'yanov} (Zhytomyr Ivan Franko State University;
Institute of Applied Ma\-the\-ma\-tics and Mechanics of NAS of
Ukraine, Slov'yans'k)

{\bf V.A.~Targonskii, N.S.~Ilkevych} (Zhytomyr Ivan Franko State
University)

\medskip
{\bf Про квазілінійні рівняння Бельтрамі з обмеженнями на дотичну
дилатацію}

{\bf On quasilinear Beltrami equations with restrictions on
tangential dilatation}

\medskip\medskip
Досліджуються квазілінійні рівняння Бельтрамі, комплексні
коефіцієнти котрих залежать від невідомої функції. У термінах так
званої дотичної дилатації знайдені умови, за яких ці рівняння мають
гомеоморфні $ACL$-розв'язки. Окремо знайдені умови, які забезпечують
існування відповідних неперервних $ACL$-розв'язків.

\medskip\medskip
We study quasilinear Beltrami equations, the complex coefficients of
which depend on the unknown function. In terms of the so-called
tangential dilatation, we have found conditions under which these
equations have homeomorphic $ACL$-solutions. Sepa\-ra\-tely, we have
found some conditions that ensure the existence of the corresponding
con\-ti\-nuous $ACL$-solutions.

\newpage
{\bf 1. Вступ.} Теореми існування розв'язків рівнянь Бельтрамі
займають важливе місце в дослідженнях з сучасного аналізу. Роботи на
цю тему активно публікуються різними авторами у всьому світі, див.,
напр., \cite{A}--\cite{SalSt}. Окремо відзначимо відповідні
публікації донецької і житомирської наукових шкіл, див., напр.,
\cite{GRSY}, \cite{MRSY},  \cite{RSY$_2$}, \cite{Sev$_1$} і
\cite{SevSkv}. Зокрема, в роботах \cite{DS$_1$} і \cite{DS$_2$}
першого співавтора досліджені вироджені рівняння Бельтрамі як з
одною, так і двома комплексними характеристиками $\mu(z, f)$ і
$\nu(z, f),$ які можуть залежати від самого розв'язку $f.$ За
відповідних умов на так звану максимальну дилатацію рівняння були
отримані теореми про існування як гомеоморфних, так і неперервних
розв'язків у одиничному крузі. Мета даного рукопису -- посилити ці
результати на випадок, коли рівняння розглядається в довільній
області $D\subset {\Bbb C},$ а умови формулюються в термінах так
званої тангенсальної дилатації.

\medskip
Нагадаємо деякі означення, див.~\cite{GRSY}, \cite{RSY$_2$}
і~\cite{DS$_1$}. Скрізь далі ми припускаємо, що відображення
$f:D\rightarrow {\Bbb C}$ області $D\subset{\Bbb C}$ {\it зберігає
орієнтацію,} тобто, якщо $f$ -- відкрите дискретне відображення і
$z\in D$ -- яка-небудь його точка диференційовності, то {\it
якобіан} цього відображення в точці $z$ невід'ємний (див., напр.,
\cite[лема~2.14]{MRV$_1$}). Для комплекснозначної функції
$f:D\rightarrow {\Bbb C},$ заданої в області $D\subset {\Bbb C},$ що
має частинні похідні по $x$ і $y$ при майже всіх $z=x + iy,$
покладемо $f_{\overline{z}} = (f_x + if_y)/2$ і $f_z = (f_x -
if_y)/2.$ Будемо говорити, що функція $\nu=\nu(z, w):D\times{{\Bbb
C}}\rightarrow {\Bbb D}$ задовольняє {\it умову Каратеодорі,} якщо
$\nu$ вимірна по $z\in D$ при кожному фіксованому $w\in{\Bbb C}$ і
неперервна по $w$ при майже всіх $z\in D.$

\medskip
Нехай функції $\mu=\mu(z, w)$ і $\nu=\nu(z, w)$ задовольняють умову
Каратеодорі і, крім того, $|\mu(z, w)|+|\nu(z, w)|<1$ при всіх $w\in
{\Bbb C}$ і майже всіх $z\in D.$ {\it Максимальною дилатацією,
побудованою по $\mu$ і $\nu$} називається наступна функція:
\begin{equation}\label{eq1}
K_{\mu, \nu}(z, w)=\quad\frac{1+|\mu(z, w)|+|\nu(z, w)|}{1-|\mu\,(z,
w)|-|\nu(z, w)|}\,.
\end{equation}
Згідно~\cite[розділ~11]{MRSY}, {\it тангенсальною (дотичною)}
дилатацією, побудованою по функціям $\mu(z, w)$ та $\nu(z, w)$
відносно точки $z_0\in {\Bbb C}$ та дійсного параметру $\theta\in
[0, 2\pi)$ будемо називати величину
\begin{equation}\label{eq20} K^T_{\mu, \nu}(z, z_0, w, \theta)=
\frac{\left|1-\frac{\overline{z-z_0}}{z-z_0}\left(\mu(z, w)+\nu(z,
w)e^{i\theta}\right)\right|^2}{1-|\mu(z, w)+\nu(z,
w)e^{i\theta}|^2}\,,
\end{equation}
-- де $z\in D$ -- точка невиродженої диференційовності $f.$
Зауважимо, що якобіан відображення $f$ в точці $z\in D$ можна
порахувати за допомогою рівності
$$J(z,
f)=|f_z|^2-|f_{\overline{z}}|^2\,.$$
Нехай функції $\mu=\mu(z, w)$ і $\nu=\nu(z, w)$ задовольняють умову
Каратеодорі і, крім того, $|\mu(z, w)|+|\nu(z, w)|<1$ при всіх $w\in
{\Bbb C}$ і майже всіх $z\in D.$ Розглянемо {\it квазілінійне
рівняння Бельтрамі з двома характеристиками:}
\begin{equation}\label{eq1:}
f_{\overline{z}}=\mu(z, f(z))\cdot f_z+\nu(z, f(z))\cdot
\overline{f_z}\,.
\end{equation}
Відображення $f:D\rightarrow {\Bbb C}$ будемо називати {\it
регулярним розв'язком рівняння~(\ref{eq1:}),} якщо $f\in W_{\rm
loc}^{1,1}$ і $J(z, f)\ne 0$ майже скрізь у $D.$ Будемо говорити, що
локально інтегровна в околі точки $x_0\in D$ функція
${\varphi}:D\rightarrow{\Bbb R}$ має {\it скінченне середнє
коливання} в точці $x_0$ (пишемо: $\varphi\in FMO(x_0)$), якщо
\begin{equation}\label{eq17:}
{\limsup\limits_{\varepsilon\rightarrow
0}}\frac{1}{\Omega_n\varepsilon^n}\int\limits_{B(
x_0,\,\varepsilon)}
|{\varphi}(x)-\overline{{\varphi}}_{\varepsilon}|\ dm(x)\, <\,
\infty\,,
\end{equation}
де $\Omega_n$ -- об'єм одиничної кулі в ${\Bbb R}^n,$
$\overline{{\varphi}}_{\varepsilon}=\frac{1}{\Omega_n\varepsilon^n}\int\limits_{B(
x_0,\,\varepsilon)} {\varphi}(x)\ dm(x)$ (див., напр.,
\cite[розд.~2]{RSY$_2$}). Позначимо через $q_{z_0}(r)$ середнє
значення функції $Q^1_{z_0}$ над колом $$S(z_0,
r)=\{|z-z_0|=r\}\,,$$
\begin{equation}\label{eq2}
q^1_{z_0}(r)=\frac{1}{2\pi}\int\limits_{0}^{2\pi}Q^1_{z_0}
(z_0+re^{\,i\varphi})\,d\varphi\,.
\end{equation}
Справедливе наступне твердження, справедливість якого для лінійних і
квазілінійних рівнянь встановлена в~\cite{DS$_1$}.

\medskip
\begin{theorem}\label{th4.6.1}{\sl\,
Нехай $D$ -- область у ${\Bbb C},$ і нехай функції $\mu=\mu(z, w)$ і
$\nu=\nu(z, w)$ задовольняють умову Каратеодорі і, крім того,
$|\mu(z, w)|+|\nu(z, w)|<1$ при всіх $w\in {\Bbb C}$ і майже всіх
$z\in D.$ Припустимо, що існує функція $Q:D\rightarrow [1, \infty]$
така, що $K_{\mu, \nu}(z, w)\leqslant Q(z)\in L_{\rm loc}^1(D)$ для
майже всіх $z\in D$ і всіх $w\in {\Bbb C},$ де функція $K_{\mu,
\nu}$ визначена в (\ref{eq1}). Припустимо, що для будь-якого $z_0\in
D$ існує функція $Q^1_{z_0}:D\rightarrow [0, \infty]$ така, що майже
всіх $z\in D,$ всіх $w\in {\Bbb C}$ і всіх $\theta\in [0, 2\pi)$
виконується нерівність
\begin{equation}\label{eq2A}
K^T_{\mu, \nu}(z, z_0, w, e^{i\theta})\leqslant Q^1_{z_0}(z)\,.
\end{equation}
Припустимо, що виконана одна з двох умов: або $Q^1_{z_0}\in FMO(D),$
або для кожного $z_0\in D$
\begin{equation}\label{eq15:}
\int\limits_{0}^{\delta(z_0)}\frac{dr}{rq^1_{z_0}(r)}=\infty\,,
\end{equation}
де $\delta(z_0)>0$ -- деяке число,
$\delta(z_0)<{\rm dist\,}(z_0,\partial D),$ а $q^1_{z_0}(r)$
визначено в~(\ref{eq2}).
Тоді рівняння (\ref{eq1:}) має регулярний гомеоморфний розв'язок $f$
класу $W_{\rm loc}^{1,1}$ в $D,$ такий що $f^{\,-1}\in W_{\rm
loc}^{\,1,2}(f(D)).$ Більше того, $f$ має гомеоморфне продовження в
${\Bbb C},$ яке є конформним зовні області $D,$ та його можна обрати
таким, що $f(0)=0,$ $f(1)=1.$ }
\end{theorem}

\medskip
Формулювання теореми~\ref{th4.6.1} зроблено в достатньо загальній
формі. Сформулюємо тепер декілька важливих наслідків, які
відносяться до цієї теореми. Передусім, прямими обчисленнями ми
отримаємо, що
$$K^T_{\mu, \nu}(z, z_0, w, \theta)\leqslant K_{\mu, \nu}(z, w)$$
для всяких $z_0\in D,$ майже всіх $z\in D,$ всіх $w\in {\Bbb C}$ і
всіх $\theta\in[0, 2\pi).$ Отже, справедливо наступне твердження.

\medskip
\begin{corollary}\label{cor1}
{\sl Твердження теореми~\ref{th4.6.1} виконується, якщо
умову~(\ref{eq2A}) замінити однією з умов: $Q\in FMO(D),$ або
\begin{equation}\label{eq3B}
\int\limits_{0}^{\delta(z_0)}\frac{dr}{rq_{z_0}(r)}=\infty\,,
\end{equation}
де $\delta(z_0)>0$ -- деяке число,
$\delta(z_0)<{\rm dist\,}(z_0,\partial D),$ а $q_{z_0}(r)$ визначено
співвідношенням

\begin{equation}\label{eq2D}
q_{z_0}(r)=\frac{1}{2\pi}\int\limits_{0}^{2\pi}Q(z_0+re^{\,i\varphi})\,d\varphi\,.
\end{equation}
}
\end{corollary}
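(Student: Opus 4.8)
The plan is to obtain Corollary~\ref{cor1} as an immediate specialization of Theorem~\ref{th4.6.1}, taking, for every $z_0\in D$, the majorant $Q^1_{z_0}$ occurring in the theorem to be the given function $Q$ itself. With this choice the spherical mean $q^1_{z_0}(r)$ from~(\ref{eq2}) coincides with the mean $q_{z_0}(r)$ from~(\ref{eq2D}), so the two hypotheses of the theorem that survive, namely $Q^1_{z_0}\in FMO(D)$ and the divergence condition~(\ref{eq15:}), turn word for word into the alternatives $Q\in FMO(D)$ and~(\ref{eq3B}) of the corollary. Hence the only thing that really has to be checked is that the choice $Q^1_{z_0}:=Q$ is \emph{admissible}, i.e.\ that the bound~(\ref{eq2A}) holds with this substitution.

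The key step is therefore the pointwise inequality $K^T_{\mu,\nu}(z,z_0,w,\theta)\leqslant K_{\mu,\nu}(z,w)$ announced before the statement of the corollary. I would verify it directly: fix $z\ne z_0$, $w\in{\Bbb C}$, $\theta\in[0,2\pi)$, and set $t=\overline{z-z_0}/(z-z_0)$ (so $|t|=1$) and $\zeta=\mu(z,w)+\nu(z,w)e^{i\theta}$ (so $|\zeta|\leqslant|\mu(z,w)|+|\nu(z,w)|<1$). By the triangle inequality $|1-t\zeta|\leqslant 1+|\zeta|$, and since $1-|\zeta|^2=(1-|\zeta|)(1+|\zeta|)$,
\[
K^T_{\mu,\nu}(z,z_0,w,\theta)=\frac{|1-t\zeta|^2}{1-|\zeta|^2}\leqslant\frac{(1+|\zeta|)^2}{(1-|\zeta|)(1+|\zeta|)}=\frac{1+|\zeta|}{1-|\zeta|}\,.
\]
As $x\mapsto (1+x)/(1-x)$ is increasing on $[0,1)$ and $|\zeta|\leqslant|\mu(z,w)|+|\nu(z,w)|$, the right-hand side is at most $\bigl(1+|\mu(z,w)|+|\nu(z,w)|\bigr)/\bigl(1-|\mu(z,w)|-|\nu(z,w)|\bigr)=K_{\mu,\nu}(z,w)$, which is the asserted estimate.

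Combining this with the hypothesis of the corollary, $K_{\mu,\nu}(z,w)\leqslant Q(z)$ for a.e.\ $z\in D$ and all $w\in{\Bbb C}$, I get $K^T_{\mu,\nu}(z,z_0,w,e^{i\theta})\leqslant Q(z)$ for every $z_0\in D$, a.e.\ $z\in D$, all $w\in{\Bbb C}$ and all $\theta\in[0,2\pi)$; that is, (\ref{eq2A}) holds with $Q^1_{z_0}:=Q$. Theorem~\ref{th4.6.1} then applies under precisely the hypotheses listed in the corollary and delivers the homeomorphic $W^{1,1}_{\rm loc}$-solution $f$ of~(\ref{eq1:}) with $f^{-1}\in W^{1,2}_{\rm loc}(f(D))$, admitting the stated extension to a homeomorphism of $\overline{\Bbb C}$ normalized by $f(0)=0$, $f(1)=1$. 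I do not anticipate any genuine obstacle: the sole non-routine ingredient is the inequality $K^T_{\mu,\nu}\leqslant K_{\mu,\nu}$ above, and everything else is a verbatim reduction to the theorem.
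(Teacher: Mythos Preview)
Your argument is correct and matches the paper's own justification: the paper simply records the pointwise inequality $K^T_{\mu,\nu}(z,z_0,w,\theta)\leqslant K_{\mu,\nu}(z,w)$ immediately before stating the corollary and deduces Corollary~\ref{cor1} from Theorem~\ref{th4.6.1} with the choice $Q^1_{z_0}=Q$. You additionally supply the short computation verifying that inequality, which the paper leaves to the reader.
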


Для формулювань подальших результатів і їх доведення нам необхідно
окремо окреслити випадок, коли максимальна і тангенсальна дилатації
залежать тільки від одної змінної $z.$ Нехай надалі функції $K_{\mu,
\nu}(z)$ і $K^T_{\mu, \nu}(z)$ визначені рівностями
$$K_{\mu,\nu}(z):=K_{\mu, \nu}(z, 0)\,,\qquad
K^T_{\mu,\nu}(z, z_0):=K^T_{\mu, \nu}(z, z_0, 0)\,,$$
крім того, нехай функції $K_{\mu}$ і $K^T_{\mu}$ визначені
рівностями
\begin{equation}\label{eq2E}
K_{\mu}(z):=K_{\mu, 0}(z, 0)\,,\qquad K^T_{\mu}(z, z_0):=K^T_{\mu,
0}(z, z_0, 0)\,.
\end{equation}
Поклавши в теоремі~\ref{th4.6.1} $\nu\equiv 0,$ маємо наступне.

\medskip
\begin{corollary}\label{cor2}{\sl\,
Нехай функція $\mu=\mu(z, w)$ задовольняє умову Каратеодорі і, крім
того, $|\mu(z, w)|<1$ при всіх $w\in {\Bbb C}$ і майже всіх $z\in
D.$ Припустимо, що існує функція $Q: D\rightarrow [1, \infty]$ така,
що $K_{\mu}(z, w)\leqslant Q(z)\in L_{\rm loc}^1(D)$ для майже всіх
$z\in D$ і всіх $w\in {\Bbb C}.$ Припустимо, що для кожного $z_0\in
D$ існує функція $Q^1=Q^1_{z_0}:D\rightarrow [0, \infty]$ така, що
для майже всіх $z\in D,$ всіх $w\in {\Bbb C}$ і всіх $\theta\in[0,
2\pi)$ виконується нерівність
\begin{equation}\label{eq4B}
K^T_{\mu}(z, z_0, w, \theta)\leqslant Q^1_{z_0}(z)\,,
\end{equation}
де функція $K^T_{\mu}(z, z_0, w, \theta)$ визначена в~(\ref{eq20}).
Припустимо, що виконана одна з двох умов: або $Q^1_{z_0}\in FMO(D),$
або для кожного $z_0\in D$ виконана рівність~(\ref{eq15:}), де
$\delta(z_0)>0$ -- деяке число, $\delta(z_0)<{\rm
dist\,}(z_0,\partial D),$ а $q^1_{z_0}(r)$ визначено в~(\ref{eq2}).
Тоді рівняння (\ref{eq1:}) має регулярний гомеоморфний розв'язок $f$
класу $W_{\rm loc}^{1,1}$ в $D,$ такий що $f^{\,-1}\in W_{\rm
loc}^{\,1,2}(f(D)).$ Більше того, $f$ має гомеоморфне продовження в
${\Bbb C},$ яке є конформним зовні області $D,$ та його можна обрати
таким, що $f(0)=0,$ $f(1)=1.$ }
\end{corollary}

\medskip
Оскільки $K^T_{\mu}(z, z_0)\leqslant K_{\mu}(z),$ з
наслідку~\ref{cor2} миттєво отримаємо наступне.

\medskip
\begin{corollary}\label{cor3}
{\sl Твердження наслідку~\ref{cor2} виконується, якщо
умову~(\ref{eq4B}) замінити однією з умов: $Q\in FMO,$
або~(\ref{eq3B}), де $\delta(z_0)>0$ -- деяке число,
$\delta(z_0)<{\rm dist\,}(z_0,\partial D),$ а $q_{z_0}(r)$ визначено
співвідношенням~(\ref{eq2D}).
}
\end{corollary}

\medskip
{\bf 2. Лема про існування гомеоморфного розв'язку лінійного
рівняння Бельтрамі з двома характеристиками}. Наступна лема була
доведена у~\cite[лема~9.1]{GRSY} для рівняння Бельтрамі з двома
характеристикою за відповідних умов на максимальну дилатацію
$K_{\mu, \nu}.$ Зараз ми сформулюємо і доведемо відповідне
твердження для тангенсальної дилатації $K^T_{\mu, \nu}.$

\medskip
\begin{lemma}\label{lem2}{\sl\,
Нехай $D$ -- область комплексної площини, і нехай функції
$\mu:D\rightarrow {\Bbb D}$ і $\nu: D\rightarrow {\Bbb D}$ є
вимірними за Лебегом і, крім того, $|\mu(z)|+|\nu(z)|<1$ при майже
всіх $z\in D.$ Нехай, крім того, існує функція $Q:D\rightarrow [1,
\infty]$ така, що $K_{\mu, \nu}(z)\leqslant Q(z)\in L_{\rm
loc}^1(D)$ для майже всіх $z\in D.$ Припустимо, що для будь-якого
$z_0\in D$ існує функція $Q^{z_0}_1(z):D\rightarrow [1, \infty)$
така, що $K^T_{\mu, \nu, \theta}(z, z_0)\leqslant Q^{z_0}_1(z)$ при
майже всіх $z\in D$ і всіх $\theta\in [0, 2\pi);$ крім того,
знайдуться $0<\varepsilon^{\,\prime}_0\leqslant \varepsilon_0< {\rm
dist\,}(z_0,
\partial D)$ і вимірна за Лебегом функція $\psi_{z_0}(0, \infty)\rightarrow (0,
\infty)$ такі, що
\begin{equation}\label{eq1E} 0<I(\varepsilon,
\varepsilon_0):=\int\limits_{\varepsilon}^{\varepsilon_0}\psi_{z_0}(t)\,dt
< \infty
\end{equation}
при $\varepsilon \in(0, \varepsilon^{\,\prime}_0),$ при цьому,
\begin{equation}\label{eq1F}
\int\limits_{\varepsilon<|z-z_0|<\varepsilon_0}
Q^1_{z_0}(z)\cdot\psi_{z_0}^{\,2}(|z-z_0|)
 \, dm(z)= o(I^{\,2}(\varepsilon, \varepsilon_0))\,.
\end{equation}
Тоді рівняння
\begin{equation}\label{eq1G}
f_{\overline{z}}=\mu(z)\cdot f_z+\nu(z)\cdot \overline{f_z}
\end{equation}
має регулярний гомеоморфний розв'язок $f$ класу $W_{\rm loc}^{1,1}$
в $D,$ такий що $f^{\,-1}\in W_{\rm loc}^{\,1,2}(f(D)).$}
\end{lemma}

\medskip
\begin{proof} Переважно будемо користуватися схемою
доведення леми~9.1 в~\cite{GRSY}. Покладемо $D_n:=D\cap B(0, n)$ і
розглянемо послідовності функцій
\begin{equation}\label{eq1H} \mu_n(z)= \left
\{\begin{array}{rr}
 \mu(z),& z\in B(0, n)\,, Q(z)\leqslant n,
\\ 0\ , & z\in B(0, n)\,, Q(z)>n
\end{array} \right.
\end{equation}
і
\begin{equation}\label{eq1I} \nu_n(z)= \left
\{\begin{array}{rr}
 \nu(z),& z\in B(0, n)\,, Q(z)\leqslant n,
\\ 0\ , & z\in B(0, n)\,, Q(z)>n\,.
\end{array} \right.
\end{equation}
Зауважимо, що $K_{\mu_n, \nu_n}(z)\leqslant n$ при майже всіх $z\in
D$ і всіх $w\in{\Bbb C}.$ Отже, за \cite[теорема~5.1]{Bo} існує
$n$-ква\-зікон\-фор\-мний розв'язок $\omega_n$ рівняння
$$\overline{\partial}\omega_n=\partial\omega_n\mu^{\,*}_n(z)+
\overline{\partial \omega_n}\nu^*_n(z)\,,$$
який квазіконформно відображає одиничний круг на себе такий, що
$\omega_n(0)=0,$ де $\mu^{\,*}_n(z)=\mu_n(nz)$ і
$\nu^{\,*}_n(z)=\nu_n(nz),$ $z\in {\Bbb D}.$ Зауважимо, що
$f_n(z)=\omega(z/n)/|\omega(1/n)|$ відображає одиничний круг на круг
$B(0, R_n),$ де $R_n=1/|\omega(1/n)|\geqslant 1,$ $f_n(0)=1,$
$|f_n(1)|=1$ і, крім того, $f_n$ задовольняє рівняння
\begin{equation}\label{eq1M}\overline{\partial}f_n=\partial f_n\mu_n(z)+
\overline{\partial f_n}\nu_n(z)\qquad \text{для\quad м.в.}\quad z\in
B(0, n)\,.
\end{equation}
За допомогою інверсії продовжимо відображення $f_n$ за межу
одиничного круга на розширену комплексну площину $\overline{\Bbb
C},$ так що $f_n(\infty)=\infty.$

\medskip
З іншого боку, з огляду на теорему~11.1 в~\cite{MRSY} кожне $f_n$
задовольняє оцінку
\begin{equation}\label{eq1K}
M(f_n(\Gamma(S(z_0, r_1), S(z_0, r_2),\,A)))\leqslant
\int\limits_{A(z_0, r_1, r_2)} K^T_{\mu_{f_n}}(z, z_0)\cdot
\eta^2(|z-z_0|)\, dm(z)
\end{equation}
у будь-якому кільці $A=A(z_0, r_1,r_2)=\{z\in {\Bbb C}:
r_1<|z-z_0|<r_2\}$ при довільному $z_0\in D,$ довільних
$0<r_1<r_2<{\rm dist}\,(z_0,
\partial D)$ і кожної вимірної за Лебегом функції $\eta:
(r_1,r_2)\rightarrow [0,\infty]$ такої, що
\begin{equation}\label{eq1L}
\int\limits_{r_1}^{r_2}\eta(r)\, dr \geqslant 1\,.
\end{equation}
Зауважимо, що умову~(\ref{eq1M}) можна переписати в вигляді
$$(f_n)_{\overline{z}}=(f_n)_z\mu_n(z)+
(f_n)_{\overline{z}}\nu_n(z)=(f_n)_z\left(\mu_n(z)+
\frac{(f_n)_{\overline{z}}}{(f_n)_z}\cdot\nu_n(z)\right)\,,$$
тому $\mu_{f_n}(z)=\mu_n(z)+
\frac{(f_n)_{\overline{z}}}{(f_n)_z}+\nu_n(z)$ і, отже,
$$K^T_{\mu_{f_n}}(z, z_0)=\frac{\left|1-\frac{\overline{z-z_0}}{z-z_0}\left(\mu_n(z)+
\frac{(f_n)_{\overline{z}}}{(f_n)_z}\cdot\nu_n(z)\right)\right|^2}{1-|\mu_n(z)+
\frac{(f_n)_{\overline{z}}}{(f_n)_z}\cdot\nu_n(z)|^2}\leqslant
Q^{z_0}_1(z)$$
майже скрізь, бо
$\frac{(f_n)_{\overline{z}}}{(f_n)_z}=e^{i\theta_n(z)}\in D$ при
кожному натуральному $n$ і майже всіх $z,$ де
$\theta_n=\theta_n(z)\in [0, 2\pi).$ Тоді з огляду на
співвідношення~(\ref{eq1E}) і~(\ref{eq1F}) та за лемою~7.6
в~\cite{MRSY} послідовність $f_n$ є одностайно неперервною відносно
хордальної (сферичної) метрики $h$ в $\overline{{\Bbb C}}$ (див.
означення~12.1 у \cite{Va}). За критерієм Арцела-Асколі (див.,
напр., \cite[теорема~20.4]{Va}) існує підпослідовність $f_{n_k}$
послідовності $f_n,$ $k=1,2,\ldots ,$ яка збігається при
$k\rightarrow\infty$ до декого відображення $f$ локально рівномірно
в ${\Bbb C}.$ Отже, за лемою~4.2 в~\cite{RS$_1$} відображення $f$ є
або гомеоморфізмом у ${\Bbb C},$ або сталою в $\overline{{\Bbb C}}.$
Друга ситуація виключена враховуючи умови нормування $f_n(0)=0,$
$|f_n(1)|=1.$ Оскільки $K_{\mu, \nu}(z)\leqslant Q(z)\in L_{\rm
loc}^1(D)$ для майже всіх $z\in D,$ для майже всіх $z\in D$
знайдеться номер $k_0=k_0(z)$ такий, що $\mu_{n_k}(z)=\mu(z),$
$\nu_{n_k}(z)=\nu(z)$ при $n_k\geqslant n_{k_0}(z).$ Отже, для м.в.
$z,$
$$\mu_{n_k}(z)\rightarrow \mu(z)\,,\qquad \nu_{n_k}(z)\rightarrow \nu(z)$$
при $k\rightarrow\infty.$ За твердженням~\ref{pr4.6.2}
$\overline{\partial} f=\mu(z)\partial f+ \nu(z)\partial f,$ тобто,
$f$ -- гомеоморфний розв'язок рівняння~(\ref{eq1G}), причому $f\in
W_{\rm loc}^{1, 1}.$

\medskip
Зауважимо, що за теоремою збіжності розв'язків рівняння Бельтрамі
$f^{\,-1}\in W^{1,2}_{\rm loc}$ (див. \cite[наслідок~2.4]{GRSY}).
Тоді за теоремою Малого-Мартіо $f^{\,-1}$ має $N$-властивість Лузіна
(див., напр., \cite[наслідок~B]{MM}). Нарешті, за теоремою
Пономарьова $J(z, f)\ne 0$ майже скрізь, див. \cite[теорема~1]{Pon}.
Лему доведено. \end{proof} $\Box$

\medskip
{\bf 3. Існування гомеоморфного розв'язку квазілінійного рівняння.}
Для зручності покладемо $\partial f=f_z,$
$\overline{\partial}f=f_{\overline{z}}.$ Наступне твердження
доведено в~\cite[лема~9.1]{GRSY}.

\medskip
\begin{proposition}\label{pr4.6.2}{\sl\,
Нехай $D\subset {\Bbb C},$ і нехай $f_n:D\rightarrow {\Bbb C}$ --
послідовність гомеоморфних розв'язків рівняння $\overline{\partial}
f_n=\mu_n(z)\partial f_n+ \nu_n(z)\overline{\partial f_n}$ класу
$W_{\rm loc}^{1, 1}$ таких, що
$$\frac{1+|\mu_n(z)|+|\nu_n(z)|}{1-|\mu_n(z)|-|\nu_n(z)|}\quad\leqslant\quad
Q(z)\,\in\,L_{loc}^1(D)$$
при всіх $n=1,2,\ldots.$ Якщо $f_n\rightarrow f$ локально рівномірно
в $D$ при $n\rightarrow \infty$ і $f:D\rightarrow {\Bbb C}$ --
гомеоморфізм у $D,$ то $f\in W_{\rm loc}^{1, 1}$ і, крім того,
$\partial f_n$ і $\overline{\partial}f_n$ збігаються слабко в
$L_{loc}^1$ до $\partial f$ і $\overline{\partial}f,$ відповідно.
Якщо $\mu_n\rightarrow\mu$ при $n\rightarrow\infty$ і
$\nu_n\rightarrow\nu$ при $n\rightarrow\infty$ майже скрізь, то
$\overline{\partial} f=\mu(z)\partial f+ \nu(z)\partial f$ майже
скрізь.}
\end{proposition}

\medskip
Нагадаємо, що
$$K_{\mu, \nu}(z, w)=\quad\frac{1+|\mu(z, w)|+|\nu(z, w)|}
{1-|\mu\,(z, w)|-|\nu(z, w)|}$$
і
$$K^T_{\mu, \nu}(z, z_0, w, u, v, \theta)=
\frac{\left|1-\frac{\overline{z-z_0}}{z-z_0}\left(\mu(z, w)+\nu(z,
w)e^{i\theta}\right)\right|^2}{1-|\mu(z, w)+\nu(z,
w)e^{i\theta}|^2}\,.$$
Аналоги наступної леми раніше доводились різними авторами (див.,
напр., \cite[лема~9.1]{GRSY}, \cite[лема~1]{DS$_1$}). Версія, яка
наводиться нижче, відповідає нелінійному рівнянню
Бельтрамі~(\ref{eq1:}).

\medskip
\begin{lemma}\label{lem4.6.1}{\sl\,
Нехай функції $\mu=\mu(z, w)$ і $\nu=\nu(z, w)$ задовольняють умову
Каратеодорі і, крім того, $|\mu(z, w)|+|\nu(z, w)|<1$ при всіх $w\in
{\Bbb C}$ і майже всіх $z\in D.$ Припустимо, що існує функція
$Q:D\rightarrow [1, \infty]$ така, що $K_{\mu, \nu}(z, w)\leqslant
Q(z)\in L_{\rm loc}^1(D)$ для майже всіх $z\in D$ і всіх $w\in {\Bbb
C},$ де функція $K_{\mu, \nu}$ визначена в (\ref{eq1}). Припустимо,
що для будь-якого $z_0\in D$ існує функція $Q^1_{z_0}:D\rightarrow
[0, \infty]$ така, майже всіх $z\in D,$ всіх $w\in {\Bbb C}$ і всіх
$\theta\in [0, 2\pi)$ виконується нерівність
\begin{equation}\label{eq2C}
K^T_{\mu, \nu}(z, z_0, w, \theta)\leqslant Q^1_{z_0}\,.
\end{equation}
Припустимо, що для будь-якого $z_0\in D$ існують
$0<\varepsilon^{\,\prime}_0\leqslant \varepsilon_0< {\rm
dist\,}(z_0,
\partial D),$ і
вимірна за Лебегом функція $\psi_{z_0}:(0, \infty)\rightarrow (0,
\infty)$ такі, що
\begin{equation}\label{eq1A} 0<I(\varepsilon,
\varepsilon_0):=\int\limits_{\varepsilon}^{\varepsilon_0}\psi_{z_0}(t)\,dt
< \infty
\end{equation}
при $\varepsilon \in(0, \varepsilon^{\,\prime}_0),$
$I(\varepsilon)\rightarrow \infty$ при $\varepsilon\rightarrow 0,$
і, при цьому,
\begin{equation}\label{eq10:}
\int\limits_{\varepsilon<|z-z_0|<\varepsilon_0}
Q^1_{z_0}(z)\cdot\psi_{z_0}^{\,2}(|z-z_0|)
 \, dm(z)=o(I^{\,2}(\varepsilon, \varepsilon_0))\,.
\end{equation}
Тоді рівняння (\ref{eq1:}) має регулярний гомеоморфний розв'язок $f$
класу $W_{\rm loc}^{1,1}$ в $D,$ такий що $f^{\,-1}\in W_{\rm
loc}^{\,1,2}(f(D)).$ Більше того, $f$ має гомеоморфне продовження в
${\Bbb C},$ яке є конформним зовні області $D,$ та його можна обрати
таким, що $f(0)=0,$ $f(1)=1.$ }
\end{lemma}

\begin{proof}
Розглянемо наступні послідовності функцій:
\begin{equation}\label{eq12:B} \mu_n(z, w)= \left
\{\begin{array}{rr}
 \mu(z, w),& Q(z)\leqslant n,
\\ 0\ , & Q(z)>n\,,
\end{array} \right.
\end{equation}
\begin{equation}\label{eq12:AB} \nu_n(z, w)= \left
\{\begin{array}{rr}
 \nu(z, w),& Q(z)\leqslant n,
\\ 0\ , & Q(z)>n\,.
\end{array} \right.
\end{equation}
Зауважимо, що $K_{\mu_n, \nu_n}(z,w)\leqslant n$ при майже всіх
$z\in D$ і всіх $w\in{\Bbb C}.$ Отже,
\begin{equation}\label{eq5C}
|\mu_n (z, w)|+|\nu_n (z, w)|\leqslant\frac{n-1}{n+1}<1
\end{equation}
при майже всіх $z\in D$ і майже всіх $w\in {\Bbb C}.$ Розглянемо
рівняння
\begin{equation}\label{eq5D}
\overline{\partial}f_n=\partial f_n\mu_n(z, f_n)+\overline{\partial
f_n}\nu_n(z, f_n)\,.
\end{equation}
Позначимо
$$H_n(z, w, \zeta)=\zeta\mu_n(z, w)+\overline{\zeta}\nu_n(z, w)\,,
\quad \zeta\in {\Bbb C}\,.$$
Тоді для $\zeta, \xi\in {\Bbb C}$ з огляду на~(\ref{eq5C}) та за
нерівністю трикутника ми будемо мати, що
$$|H_n(z, w, \zeta)-H_n(z, w, \xi)|=|\zeta-\xi|\cdot
\biggl|\mu_n(z, w)|+\nu_n(z, w)\cdot
\frac{\overline{\zeta-\xi}}{\zeta-\xi}\biggr|\leqslant$$
$$\leqslant\frac{n-1}{n+1}|\zeta-\xi|\,.$$
В силу умов Каратедорі функція $H_n=H_n(z, w, \zeta)$ є вимірною по
$z\in D$ при кожних фіксованих $w, \zeta\in {\Bbb C}$ і неперервна
по $w, \zeta\in {\Bbb C}$ при майже всіх $z\in D.$ Мит вважаємо
функцію $H_n(z, w, \zeta)$ продовженою нулем зовні області $D.$ Тоді
з огляду на~\cite[теорема~8.2.1]{AIM} рівняння~(\ref{eq5D}) має
гомеоморфний ACL-розв'язок
такий, що $f_n(0)=0,$ $f_n(1)=1.$ Зауважимо, що
рівняння~(\ref{eq5D}) можна записати у вигляді звичайного рівняння
Бельтрамі $\overline{\partial} f_n=\mu^{\,*}_n(z)\partial f_n$ з
одною характеристикою
$\mu^{\,*}_n(z)=\mu_n(z)+\frac{\overline{\partial f_n}}{{\partial
f_n}}\cdot \nu_n(z)\,,$ бо з~(\ref{eq5D}) ми отримаємо, що
$$\overline{\partial} f_n=\left(\mu_n(z)+\frac{\overline{\partial f_n}}{{\partial
f_n}}\cdot \nu_n(z)\right)\partial f_n\,.$$
В цьому випадку, $\bigl|\frac{\overline{\partial f_n}}{{\partial
f_n}}\bigr|=1$ при майже всіх $z\in D,$ тому
$\frac{\overline{\partial f_n}}{{\partial f_n}}=e^{i\theta_n(z)},$
$\theta_n(z)\in [0, 2\pi).$ Обчислюючи за формулами~(\ref{eq20}) і
(\ref{eq2E}) тангенсальну (дотичну) дилатацію відповідну до функції
$\mu^{\,*}_n$ та використовуючи умову~(\ref{eq4B}) разом з
нерівністю трикутника, ми отримаємо, що
\begin{equation}\label{eq8B} K^T_{\mu^{\,*}_n}(z, z_0)=
\frac{\left|1-\frac{\overline{z-z_0}}{z-z_0}
\mu^{\,*}_n(z)\right|^2}{1-|\mu^{\,*}_n(z)|^2}=
\frac{\left|1-\frac{\overline{z-z_0}}{z-z_0}
\left(\mu_n(z)+\frac{\overline{\partial f_n}}{{\partial f_n}}\cdot
\nu_n(z)\right)\right|^2}{1-\bigl|\left(\mu_n(z)+\frac{\overline{\partial
f_n}}{{\partial f_n}}\cdot \nu_n(z)\right)\bigr|^2}\leqslant
Q_{z_0}(z)
\end{equation}
при майже всіх $z\in D.$

\medskip
З іншого боку, з огляду на теорему~11.1 в~\cite{MRSY} та
оцінку~(\ref{eq8B}) кожне $f_n$ задовольняє нерівність
\begin{equation}\label{eq1N}
M(f_n(\Gamma(S(z_0, r_1), S(z_0, r_2),\,A)))\leqslant
\int\limits_{A(z_0, r_1, r_2)} Q_{z_0}(z)\cdot \eta^2(|z-z_0|)\,
dm(z)
\end{equation}
у будь-якому кільці $A=A(z_0, r_1,r_2)=\{z\in {\Bbb C}:
r_1<|z-z_0|<r_2\}$ при довільному $z_0\in D,$ довільних
$0<r_1<r_2<{\rm dist}\,(z_0,
\partial D)$ і кожної вимірної за Лебегом функції $\eta:
(r_1,r_2)\rightarrow [0,\infty]$ такої, що
\begin{equation}\label{eq1O}
\int\limits_{r_1}^{r_2}\eta(r)\, dr \geqslant 1\,.
\end{equation}
Тоді з огляду на співвідношення~(\ref{eq1A}) і~(\ref{eq10:}) та за
лемою~7.6 в~\cite{MRSY} послідовність $f_n$ є одностайно неперервною
відносно хордальної (сферичної) метрики $h$ в $\overline{{\Bbb C}}$
(див. означення~12.1 у \cite{Va}). За критерієм Арцела-Асколі (див.,
напр., \cite[теорема~20.4]{Va}) існує підпослідовність $f_{n_k}$
послідовності $f_n,$ $k=1,2,\ldots ,$ яка збігається при
$k\rightarrow\infty$ до декого відображення $f$ локально рівномірно
в ${\Bbb C}.$ Отже, за лемою~4.2 в~\cite{RS$_1$} відображення $f$ є
або гомеоморфізмом у ${\Bbb C},$ або сталою в $\overline{{\Bbb C}}.$
Друга ситуація виключена враховуючи умови нормування $f_n(0)=0,$
$f_n(1)=1.$ За твердженням~\ref{pr4.6.2} $f\in W_{\rm loc}^{1,
1}(D).$

\medskip
Нарешті, з огляду на умову $K_{\mu, \nu}(z, w)\leqslant Q(z)\in
L_{\rm loc}^1(D)$ для майже всіх $z_0\in D$ знайдеться номер
$n_0=n_0(z_0)\in {\Bbb N}$ такий, що $\mu_n(z, w)=\mu(z, w)$ при
всіх $n\geqslant n_0.$ Тоді з огляду на умови Каратеодорі
для м.в. $z\in D$ і всіх $n_k\geqslant n_0$
$$\mu_{n_k}(z)= \mu_{n_k}(z, f_{n_k}(z))=\mu(z, f_{n_k}(z))\rightarrow \mu(z,
f(z))\,,$$
$$\nu_{n_k}(z)= \nu_{n_k}(z, f_{n_k}(z))=\nu(z, f_{n_k}(z))\rightarrow \nu(z,
f(z))$$
при $k\rightarrow\infty,$ бо функції $\mu$ і $\nu$ неперервні по
другому аргументу за умовою. Тоді за твердженням~\ref{pr4.6.2}
$\overline{\partial} f=\mu(z, f)\partial f+ \nu(z, f)\partial f,$
тобто, $f$ -- гомеоморфний розв'язок рівняння~(\ref{eq1:}), причому
$f\in W_{\rm loc}^{1, 1}.$

\medskip
Зауважимо, що за теоремою збіжності розв'язків рівняння Бельтрамі
$f^{\,-1}\in W^{1,2}_{\rm loc}$ (див. \cite[наслідок~2.4]{GRSY}).
Тоді за теоремою Малого-Мартіо $f^{\,-1}$ має $N$-властивість Лузіна
(див., напр., \cite[наслідок~B]{MM}). Нарешті, за теоремою
Пономарьова $J(z, f)\ne 0$ майже скрізь, див. \cite[теорема~1]{Pon}.
Рівності $f_n(0)=0$ і $f_n(1)=1$ миттєво тягнуть за собою, що
$f(0)=0,$ $f(1)=1.$ Лема доведена. \end{proof} $\Box$

\medskip
{\it Доведення теореми~\ref{th4.6.1}} випливає з леми~\ref{lem4.6.1}
і \cite[лема~1.3]{Sev$_2$}.

\medskip
{\bf 4. Існування неперервного розв'язку.} Окремо сформулюємо умови,
за яких рівняння~(\ref{eq1:}) має лише неперервний розв'язок, див.
\cite{DS$_1$}--\cite{DS$_2$} і \cite{SevSkv}. Нехай $J(z, f)\ne 0$ і
нехай відображення $f$ має частинні похідні $f_z$ і
$f_{\overline{z}}$ у точці $z.$ Тоді {\it максимальною дилатацією
відображення} $f$ в точці $z$ будемо називати наступну функцію:
\begin{equation}\label{eq1D}
K_{\mu_f}(z)=\frac{|f_z|+|f_{\overline{z}}|}{|f_z|-|f_{\overline{z}}|}\,.
\end{equation}
Покладемо $K_{\mu_f}(z)=1$ в точках $z,$ де
$|f_z|+|f_{\overline{z}}|=0$ та $K_{\mu_f}(z)=\infty$ у точках $z,$
де $|f_z|+|f_{\overline{z}}|\ne 0,$ але $J(z, f)=0.$ Визначимо також
{\it внутрішню дилатацію порядку $p\geqslant 1 $} відображення $f$
за допомогою співвідношення
\begin{equation}\label{eq17}
K_{I, p}(z,
f)=\frac{{|f_z|}^2-{|f_{\overline{z}}|}^2}{{(|f_z|-|f_{\overline{z}}|)}^p}
\end{equation}
де $J(z, f)\ne 0.$ Як і у (\ref{eq1D}), покладемо $K_{I, p}(z)=1,$
якщо $|f_z|+|f_{\overline{z}}|=0,$ і $K_{I, p}(z)=\infty$ у точках,
де $J(z, f)=0,$ проте $|f_z|+|f_{\overline{z}}|\ne 0.$ Зауважимо, що
$K_{I, 2}(z)=K_{\mu}(z).$ Покладемо $\Vert
f^{\,\prime}(z)\Vert=|f_z|+|f_{\overline{z}}|.$  Нагадаємо, що
гомеоморфізм $f:D\rightarrow {\Bbb C}$ називається {\it
квазіконформним,} якщо $f\in W_{\rm loc}^{1, 2}(D)$ і, крім того,
існує стала $K\geqslant 1$ така, що $\Vert
f^{\,\prime}(z)\Vert^2\leqslant K\cdot |J(z, f)|$ майже скрізь.

\medskip
Справедлива наступна лема, доведена в~\cite[лема~5]{DS$_2$}, див.
також~\cite[лема~2]{DS$_1$}, \cite[теорема~9.1]{GRSY} і
\cite[лема~5.1]{SevSkv}.

\medskip
\begin{lemma}\label{lem1}
{\sl\, Нехай $1<p\leqslant 2,$ нехай $\mu:D\rightarrow {\Bbb D}$ --
вимірна за Лебегом функція, і нехай $f_k,$ $k=1,2,\ldots $ --
послідовність гомеоморфізмів, що зберігають орієнтацію, області $D$
на себе, які належать класу $W_{\rm loc}^{1, 2}(D)$ і задовольняють
рівняння
\begin{equation}\label{eq1B}
\overline{\partial}f_n=\partial f_n\mu_n(z)+\overline{\partial
f_n}\nu_n(z)\,,
\end{equation}
де $\mu_n,$ $\nu_n$ -- вимірні за Лебегом функції, які задовольняють
нерівність $|\nu_n(z)|+|\mu_n(z)|<1$ майже скрізь. Припустимо, що
$f_n$ збігається локально рівномірно в $D$ до відображення
$f:D\rightarrow {\Bbb C},$ а послідовності $\mu_n(z)$ та $\nu_n(z)$
збігаються до $\mu(z)$ і $\nu(z),$ відповідно, при
$n\rightarrow\infty$ майже скрізь. Нехай також обернені відображення
$g_n:=f_n^{\,-1}$ належать класу $W_{\rm loc}^{1, 2}(f_n(D)),$ при
цьому, при майже всіх $w\in f_n(D)$
$$\int\limits_{f_n(D)}K_{I, p}(w, g_k)\,dm(w)\leqslant M$$
для деякого $M>0$ і кожного $n=1,2,\ldots .$

Тоді $f\in W_{\rm loc}^{1, p}(D)$ і $\mu,$ $\nu$  -- комплексні
характеристики відображення $f,$ тобто,
$\overline{\partial}f=\partial f\mu(z)+\overline{\partial f}\nu(z)$
при майже всіх $z\in D.$
 }
\end{lemma}

\medskip
Нехай $\mu=\mu(z, w):D\times {\Bbb C}\rightarrow {\Bbb D}$ і
$\nu=\nu(z, w): D\times {\Bbb C}\rightarrow {\Bbb D}$ -- задані
функції. Зафіксуємо $n\geqslant 1$ і покладемо
\begin{equation}\label{eq12:} \mu_n(z, w)= \left
\{\begin{array}{rr}
 \mu(z, w),&  K_{\mu, \nu}\leqslant n,
\\ 0\ , & K_{\mu, \nu}> n\,,
\end{array} \right.
\end{equation}
і
\begin{equation}\label{eq12:A} \nu_n(z, w)= \left
\{\begin{array}{rr}
 \nu(z, w),& K_{\mu, \nu}\leqslant n,
\\ 0\ , & K_{\mu, \nu}> n\,.
\end{array} \right.
\end{equation}
Нехай функції $\mu=\mu(z, w):D\times{{\Bbb C}}\rightarrow {\Bbb D}$
і $\nu=\nu(z, w):D\times{{\Bbb C}}\rightarrow {\Bbb D}$
задовольняють умову Каратеодорі. Тоді
$$K_{\mu_n, \nu_n}(z, w)=\quad\frac{1+|\mu_n(z, w)|+|\nu_n(z, w)|}
{1-|\mu_n\,(z, w)|-|\nu_n(z, w)|}\leqslant n$$
для всіх $z\in D$ і $w\in {\Bbb C}.$ Тоді за наслідком~\ref{cor1}
рівняння
$$f_{\overline{z}}=\mu_n(z, f(z))\cdot
f_z+\nu_n(z, f(z))\cdot \overline{f_z}$$
має регулярний гомеоморфний розв'язок $f_n$ класу $W_{\rm
loc}^{1,1}$ в $D,$ такий що $f_n^{\,-1}\in W_{\rm
loc}^{\,1,2}(f_n(D)).$ За цим же наслідком можна продовжити $f_n$ на
все ${\Bbb C}$ і обрати його таким, що $f_n(0)=0,$ $f_n(1)=1.$

Нехай $f_n$ -- (будь-який) такий розв'язок рівняння~(\ref{eq1:}).
Оскільки рівність
$$(f_n){\overline{z}}=\mu_n(z, f_n(z))\cdot
(f_n)_z+\mu_n(z, f_n(z))\cdot \overline{(f_n)_z}$$
можна записати в вигляді
\begin{equation}\label{eq8C}
\overline{\partial} f_n=\left(\mu_n+\frac{\overline{\partial
f_n}}{{\partial f_n}}\cdot \nu_n\right)\partial f_n\,,
\end{equation}
де $\mu_n=\mu_n(z, f_n(z))$ і $\nu_n=\nu_n(z, f_n(z)).$ З
рівності~(\ref{eq8C}) випливає, що відображення $f_n$ при кожному
$n\in{\Bbb N}$ задовольняє звичайне рівняння Бельтрамі
$f_{\overline{z}}=\mu^{\,*}(z)f_z,$ де
$\mu^{\,*}(z)=\mu_n+\frac{\overline{\partial f_n}}{{\partial
f_n}}\cdot \nu_n.$ Оцінюючи максимальну дилатацію $K_{\mu^{\,*}}(z)$
цього рівняння за нерівністю трикутника, ми отримаємо, що
\begin{equation}\label{eq9B}
K_{\mu^{\,*}}(z)\leqslant\frac{1+|\mu_n|+|\nu_n|}{1-|\mu_n|-|\nu_n|}\leqslant
n\,.
\end{equation}
З оцінки~(\ref{eq9B}) випливає, що відображення $f_n$ є
квазіконформним. Тоді й відображення $g_n=f^{\,-1}_n$ є
квазіконформним; зокрема, воно є диференційовним майже скрізь. Нехай
$K^T_{\mu_{g_n}}(w, w_0)$ -- дотична дилатація оберненого
відображення $g_n$ у точці $w\in {\Bbb C}$ відносно точки $w_0\in
{\Bbb C}$ тобто,
\begin{equation}\label{eq3D}
K^T_{\mu_{g_n}}(w, w_0)=
\frac{\left|1-\frac{\overline{w-w_0}}{w-w_0}\mu_{g_n}(w)\right|^2}{1-|\mu_{g_n}(w)|^2}\,.
\end{equation}
Визначимо також {\it внутрішню дилатацію порядку $p$ відображення
$g_n$ в точці $w$} за допомогою рівності
\begin{equation}\label{eq18}
K_{I, p}(w,
g_n)=\frac{{|(g_n)_w|}^2-{|(g_n)_{\overline{w}}|}^2}
{{(|(g_n)_w|-|(g_n)_{\overline{w}}|)}^p}\,.
\end{equation}

\medskip
Виконується наступне твердження (для лінійних рівнянь з одною і
двома характеристиками в одиничному крузі і максимальною дилатацією
замість дотичної див. також \cite{Sev$_1$}, \cite{SevSkv} і
\cite{DS$_1$}--\cite{DS$_2$}).

\medskip
\begin{theorem}\label{th1A}{\sl\, Нехай $\mu,$ $\nu,$ $\mu_n,$
$\nu_n,$ $f_n$ і $g_n$ такі, як визначено вище. Нехай $Q, Q_0:{\Bbb
C}\rightarrow[0, \infty]$ -- вимірні за Лебегом функції. Припустимо,
що $K_{\mu, \nu}(z, w)\leqslant Q_0(z)<\infty$ при всіх $w\in {\Bbb
C}$ і майже всіх $z\in D.$ Припустимо, крім того, що виконуються
наступні умови:

\medskip
1) для кожних $0<r_1<r_2<1$ і $y_0\in {\Bbb C}$ існує множина
$E\subset[r_1, r_2]$ додатної лебегової міри така, що функція $Q$ є
інтегровною по колах $S(y_0, r)$ для кожного $r\in E;$

\medskip
2) знайдеться число $1<p\leqslant 2$ і стала $M>0$ такі, що
\begin{equation}\label{eq10B}
\int\limits_{f_n(D)}K_{I, p}(w, g_n)\,dm(w)\leqslant M
\end{equation}
для всіх $n=1,2,\ldots ,$ де $K_{I, p}(w, g_n)$ визначено
у~(\ref{eq18});

\medskip
3) Нерівність
\begin{equation}\label{eq10C}
K^T_{\mu_{g_n}}(w, w_0)\leqslant Q(w)
\end{equation}
виконується для майже всіх $w\in f_n(D)$ і всіх $w_0\in f_n(D),$ де
$K_{\mu_{g_n}}$ визначено в~(\ref{eq3D}). Тоді рівняння~(\ref{eq1:})
має неперервний $W_{\rm loc}^{1, p}(D)$-розв'язок $f$ в $D.$}
\end{theorem}

\medskip
\begin{corollary}\label{cor5}
{\sl\, Зокрема, твердження теореми~\ref{th1A} виконується, якщо в
цій теоремі ми відмовляємося від умови~1), вимагаємо умову~3), а
умову 2) заміняємо наступною: $Q\in L^1({\Bbb C}).$ В цьому випадку,
розв'язок $f$ рівняння~(\ref{eq1:}) може бути обраним таким, що
рівність
\begin{equation}\label{eq11A}
|f(x)-f(y)|\leqslant\frac{C\cdot (\Vert
Q\Vert_1)^{1/2}}{\log^{1/2}\left(1+\frac{r_0}{2|x-y|}\right)}
\end{equation}
виконується для довільного компакту $K\subset D$ і всяких $x, y\in
K,$ де $\Vert Q\Vert_1$ позначає $L^1$-норму функції $Q$ в ${\Bbb
C},$ $C>0$ деяка стала і $r_0=d(K,
\partial D).$ Якщо додатково $Q(z)\in FMO({\Bbb C}),$ або
виконано умову~(\ref{eq15:}),
то відображення $f$ можна обрати гомеоморфізмом.}
\end{corollary}

\medskip
{\it Доведення теореми~\ref{th1A}.}  Нехай $M$ означає модуль сім'ї
кривих (див., напр.,~\cite[розд.~6]{Va}), $w_0\in f_n(D),$
$0<r_1<r_2<\sup\limits_{w\in f_n(D)}|w-w_0|.$ Тоді з огляду
на~\cite[теорема~4.2]{RSSY} і умову~(\ref{eq10C})
$$M(g_n(\Sigma_{r_1, r_2}))\geqslant \int\limits_{r_1}^{r_2}
\frac{dr}{\Vert Q\Vert_1(r)}\,,$$
де $\Sigma_{r_1, r_2}$ позначає сім'ю всіх перетинів кіл $S(y_0,
r)\cap f_n(D),$ $r\in (r_1, r_2),$ і $\Vert
Q\Vert_1(r)=\int\limits_{S(y_0, r)\cap
f_n(D)}Q(y)\,\,d\mathcal{H}^{1}(y).$ Нехай $C_1$ і $C_2$ -- компакти
у $D,$ такі що
$$
C_1\subset S(w_0, r_1)\quad\mbox{і}\quad C_2\subset S(w_0, r_2)\,.
$$
Застосовуючи результати Цимера про зв'язок між модулем сімей
з'єднуючих кривих та відповідних розділяючих поверхонь
(див.~\cite[теорема~3.13]{Zi$_1$} при $p=n$ і \cite[с.~50]{Zi$_2$}
при $1<p<\infty$), а також результати Хессе-Шлик про зв'язок модуля
сімей кривих з ємністю конденсаторів (див.~\cite[теорема~5.5]{Hes}
і~\cite[теорема~1]{Shl}), ми отримаємо що
\begin{equation}\label{eq5}
M(g_n(\Gamma(C_1, C_2, f_n(D))))\leqslant
\frac{2\pi}{\int\limits_{r_1}^{r_2}
\frac{dr}{rq_{w_0}(r)}}<\infty\,,
\end{equation}
де $q_{w_0}(r)$ визначено співвідношенням~(\ref{eq2D}). Тут ми
скористалися умовою~1) теореми, бо нерівність $q_{w_0}(r)<\infty$
тягне за собою скінченність правої частини у співвідношенні вище, а
також умовою~3). Візьмемо тепер зростаючу послідовність компактів
$C_1^m$ і $C_2^m,$ $m=1,2,\ldots ,$ які вичерпують $S(w_0, r_1)\cap
f_n(D)$ and $S(w_0, r_2)\cap f_n(D)$ і перейдемо до границі
$m\rightarrow\infty$ у співвідношенні
$$
M(g_n(\Gamma(C_1^m, C_1^m, f_n(D))))\leqslant
\frac{2\pi}{\int\limits_{r_1}^{r_2}
\frac{dr}{rq_{w_0}(r)}}<\infty\,.
$$
З огляду на~\cite[теорема~A.7]{MRSY},
$$M(g_n(\Gamma(S(w_0, r_1), S(w_0, r_2), f_n(D))))\leqslant
\frac{2\pi}{\int\limits_{r_1}^{r_2}
\frac{dr}{rq_{w_0}(r)}}<\infty\,.$$
  Останню нерівність можна записати в дещо іншій
формі:
$$M(\Gamma_{f_n}(w_0, r_1, r_2))\leqslant \frac{2\pi}{\int\limits_{r_1}^{r_2}
\frac{dr}{rq_{w_0}(r)}}<\infty\,.$$
Тоді з огляду на пропозицію~3 і зауваження~4 у~\cite{DS$_2$} сім'я
$f_n$ одностайно неперервна в $D.$ Отже, з огляду на теорему
Арцела-Асколі $f_n$ є нормальною сім'єю відображень
(див.~\cite[теорема~20.4]{Va}). Іншими словами, знайдеться
підпослідовність $f_{n_l},$ котра збігається до деякого відображення
$f:D\rightarrow \overline{D}$ локально рівномірно в $D.$ За
означенням $f_n,$
$$\overline{\partial} f_n=\mu_n(z, f_n(z))\partial f_n+
\nu_n(z, f_n(z))\overline{\partial f_n}\,.$$
Оскільки $K_{\mu, \nu}(z, w)\leqslant Q_0(z)<\infty$ при всіх $w\in
{\Bbb C}$ і майже всіх $z\in D.$ Отже, для майже всіх $z\in D$
знайдеться номер $l_0=l_0(z)$ такий, що $\mu_{n_l}(z,w)=\mu(z,w),$
$\nu_{n_l}(z,w)=\nu(z,w)$ при $l\geqslant l_0$ і всіх $w\in{\Bbb
C}.$ З огляду на те, що функції $\mu$ і $\nu$ задовольняють умову
Каратеодорі,
$$\mu_{n_l}(z, f_{n_l}(z))\rightarrow \mu(z, f(z))\,,$$
$$\nu_{n_l}(z, f_{n_l}(z))\rightarrow \nu(z, f(z))$$
при $l\rightarrow\infty.$ Тоді за умовою~2) лемою~\ref{lem1}
відображення $f$ належить класу $W_{\rm loc}^{1, p}(D)$ і є
розв'язком вихідного рівняння Бельтрамі~(\ref{eq1:}).~$\Box$

\medskip
{\it Доведення наслідку~\ref{cor5}.} За теоремою Фубіні умова $Q\in
L^1(D)$ тягне за собою вимірність інтегралів $\int\limits_{S(x_0,
r)\cap D}\,Q(x)\,d\mathcal{H}^1(x)$ як функцій $r$ і їх скінченність
майже скрізь при $0<r<\infty$ (див., напр.,
\cite[теорема~8.1.III]{Sa}). Отже, умова~(\ref{eq10B}) виконується
при $p=2.$ Тоді існування розв'язку рівняння~(\ref{eq1:}) і його
приналежність класу~$W_{\rm loc}^{1, 2}(D)$ випливають з
теореми~\ref{th1A}.

\medskip
З огляду на~\cite[теорема~1.2]{SevSkv} (див. також
\cite[теорема~1]{SSD}) на довільному компакті $K\subset D$
виконується нерівність
$$|f_n(x)-f_n(y)|\leqslant\frac{C\cdot (\Vert
Q\Vert_1)^{1/2}}{\log^{1/2}\left(1+\frac{r_0}{|x-y|}\right)}\quad\forall\,\,x,y\in
K\,,$$
де $\Vert Q\Vert_1$ -- норма $Q$ в $L^1(D),$ $C$ -- деяка стала і
$r_0=d(K,
\partial D).$ Переходячи тут до границі при
$n\rightarrow\infty,$ ми отримаємо~(\ref{eq11A}).

\medskip
Припустимо тепер, що~$Q\in FMO(D),$ або виконується
співвідношення~(\ref{eq15:}). Оскільки $f_n$ продовжуються на все
${\Bbb C}$ і є квазіконформними, то вони мають неперервне
продовження у точку $\infty$ до квазіконфомного відображення
$f_n:\overline{\Bbb C}=\overline{\Bbb C}$ (див., напр.,
\cite[теорема~17.3]{Va}). Оскільки $f_n(\overline{\Bbb C})$ є
відкритою множиною як образ відкритої множини $\overline{\Bbb C}$
при гомеоморфізмі $f_n,$ і замкненою як образ компакту
$\overline{\Bbb C}$ при неперервному відображенні $f_n,$ то
$f_n(\overline{\Bbb C})=\overline{\Bbb C}.$ Звідси випливає, що
$f_n({\Bbb C})={\Bbb C}.$ Тому всі відображення $g_n=f^{\,-1}_n$
визначені у ${\Bbb C}.$

Крім того, з огляду на умови нормування $g_n=0$ і $g_n(1)=1$
послідовність $g_n$ формує одностайно неперервну сім'ю (див., напр.,
~\cite[теореми~6.1 і 6.5]{RS}). За теоремою Арцела-Асколі $g_n$ є
нормальною сім'єю (див.~\cite[теорема~20.4]{Va}), тобто, існує
підпослідовність $g_{n_l},$ яка збігається до деякого
$g:D\rightarrow \overline{D}$ локально рівномірно у деякій області
$D.$ Оскільки знову $g_{n_l}(0)=0$ і $g_{n_l}(1)=1$ при всіх
$l=1,2,\ldots ,$ то в силу~\cite[теорема~4.1]{RS$_1$} відображення
$g$ є гомеоморфізмом в $D,$ причому $f_{n_l}\rightarrow f=g^{\,-1}$
при $l\rightarrow\infty$ локально рівномірно в $D$
(див.~\cite[лема~3.1]{RS$_1$}). Оскільки за умовами Каратеодорі
$\mu_n(z, f_n(z))\rightarrow \mu(z, f)$ і $\nu_n(z,
f_n(z))\rightarrow \nu(z, f(z))$ при $n\rightarrow\infty$ і при
майже всіх $z\in D,$ за лемою~\ref{lem1} відображення $f$ належить
класу $W_{\rm loc}^{1, p}(D)$ і задовольняє
рівняння~(\ref{eq1:}).~$\Box$

\medskip
{\bf 4. Приклад.}  Будемо вважати $\nu\equiv 0.$ В одиничному крузі
$D={\Bbb D}=\{z\in {\Bbb C}: |z|<1\}$ розглянемо функцію $\mu:{\Bbb
D}\times {\Bbb C}\rightarrow {\Bbb D}$ визначену наступним шляхом:
$$\mu(z, w)=e^{i\theta}\frac{1-r-|w|}{1+r+|w|}\,,$$
де $z=re^{i\theta},$ $0\leqslant r<1,$ $\theta\in [0, 2\pi).$
Зауважимо, що
\begin{equation}\label{eq9A}
K_{\mu}(z, w)=\frac{1+|\mu(z, w)|}{1-|\mu(z,
w)|}=\frac{1}{r+w}\leqslant \frac{1}{r}\,.
\end{equation}
Функція $Q(z):=\frac{1}{r}$ інтегровна в ${\Bbb D}.$ Дійсно, з
огляду на теорему Фубіні,
$$\int\limits_{\Bbb D}Q(z)\,dm(z)=\int\limits_{0}^1\left(\int\limits_{S(0, r)}
\frac{1}{r}\,\,dt\right)\,dr=2\pi<\infty\,.$$
У той же час, умова типу~(\ref{eq15:}) не виконується для функції
$Q,$ адже при $0<\delta(0)<1$
\begin{equation}\label{eq9C}
\int\limits_{0}^{\delta(0)}\frac{dr}{rq_{0}(r)}=
\int\limits_{0}^{\delta(0)}\frac{dr}{r\cdot
\frac{1}{r}}=\delta(0)<\infty\,,
\end{equation}
де $q_{0}(r)=\frac{1}{2\pi}Q(z_0+re^{it})\,dt.$ Це означає, що умови
наслідку~\ref{cor3}, які передбачають виконання
співвідношення~(\ref{eq9C}) для максимальної дилатації $K_{\mu}(z,
w),$ не виконуються.

Проте, покажемо, що відповідна умова~(\ref{eq9C}) виконана по
відношенню до дотичної дилатації. Справді, для $z_0=0$
$$K^T_{\mu(z, 0, w)}=\frac{\left|1-\frac{\overline{z}}{z}\left(\mu(z,
w)\right)\right|^2}{1-|\mu(z, w)|^2}=$$$$ =\frac{\left(1-\left(
\frac{1-r-|w|}{1+r+|w|}\right)\right)^2}{1-\biggl(
\frac{1-r-|w|}{1+r+|w|}\biggr)^2}=\frac{1-\left(
\frac{1-r-|w|}{1+r+|w|}\right)}{1+ \frac{1-r-|w|}{1+r+|w|}}<1$$
для всіх $r\in (0, 1)$ і всіх $w\in {\Bbb C}.$ Якщо покласти
$Q^1_{0}(z)\equiv 1,$ то можна зауважити, що функція $Q^1_{0}(z)$
задовольняє умову~(\ref{eq15:}) у точці $z_0=0.$ Виконання цієї
умови в інших точках $z_0$ одиничного круга ${\Bbb D}$ є очевидним з
огляду на те, що функція $K_{\mu}(z, w)$ задовольняє
умову~(\ref{eq9A}) і функція $Q(z):=\frac{1}{r}$ є локально
обмеженою в ${\Bbb D}.$

\medskip
Остаточно, за теоремою~\ref{th4.6.1} рівняння
$$f_{\overline{z}}=e^{i\theta}\frac{1-r-|f(z)|}{1+r+|f(z)|}
\cdot f_z$$
має гомеоморфний $W_{\rm loc}^{1,1}$-розв'язок у ${\Bbb D}.$

КОНТАКТНА ІНФОРМАЦІЯ

\medskip
\noindent{{\bf Євген Олександрович Севостьянов} \\
{\bf 1.} Житомирський державний університет ім.\ І.~Франко\\
вул. Велика Бердичівська, 40 \\
м.~Житомир, Україна, 10 008 \\
{\bf 2.} Інститут прикладної математики і механіки
НАН України, \\
вул.~Добровольського, 1 \\
м.~Слов'янськ, Україна, 84 100\\
e-mail: esevostyanov2009@gmail.com}

\medskip
\noindent{{\bf Валерій Андрійович Таргонський} \\
Житомирський державний університет ім.\ І.~Франко\\
вул. Велика Бердичівська, 40 \\
м.~Житомир, Україна, 10 008 \\
e-mail: w.targonsk@gmail.com }

\medskip
\noindent{{\bf Ількевич Наталія Сергіївна} \\
Житомирський державний університет ім.\ І.~Франка\\
вул. Велика Бердичівська, 40 \\
м.~Житомир, Україна, 10 008 \\
e-mail: ilkevych1980@gmail.com }


\begin{thebibliography}{99}

{\small

\bibitem{A} {\it Ahlfors~L.} Lectures on Quasiconformal Mappings. New
York, Van Nostrand, 1966.

\bibitem{AIM} {\it Astala~K., Iwaniec~T. and Martin~G.} Elliptic partial differential
equations and quasiconformal mappings in the plane. -- Princeton,
Princeton University Press,  NJ, 2009.

\bibitem{Bo} {\it Bojarski B.}
Generalized solutions of a system of differential equations of the
first order of the elliptic type with discontinuous coefficients //
Mat. Sb. -- 1957.-- V.~\textbf{43(85)}. -- P.~451–-503.

\bibitem{GRSY} {\it Gutlyanskii~V.\,Ya., Ryazanov V.\,I., Srebro U.,
Yakubov E.} The Beltrami Equation: A Geometric Approach. -- New York
etc.: Springer, 2012.

\bibitem{MRSY} {\it Martio O., Ryazanov V., Srebro U. and Yakubov
E.} Moduli in Modern Mapping Theory. -- New York: Springer Science +
Business Media, LLC, 2009.

\bibitem{RSY$_2$} {\it Ryazanov~V., Srebro~U. and Yakubov~E.}
Finite mean oscillation and the Beltrami equation // Israel Math. J.
-- 2006. -- \textbf{153}. -- P.~247--266.

\bibitem{SalSt} {\it Salimov~R., Stefanchuk~M.}
Logarithmic asymptotics of the nonlinear
Cauchy-Riemann-Beltrami equation // Ukr. Math. J. -- 2021. --
V.~\textbf{73.} -- P.~463--478.

\bibitem{Sev$_1$}
{\it Севостьянов~Є.О.} Про існування розв'язків рівнянь Бельтрамі з
умовами на обернені дилатації // Укр. мат. вісник. -- 2021. -- Т.
\textbf{18}, №~2. -- С. 243--254; translation On the existence of
solutions of the Beltrami equations with conditions on inverse
dilatations // J. Math. Sci. -- 2021. -- V. \textbf{258}, no. 3. --
P. 338--345.

\bibitem{SevSkv} {\it Sevost’yanov E.A. and Skvortsov S.A.}
Logarithmic H\"{o}lder continuous mappings and Beltrami equation //
Anal. Math. Phys. -- 2021. -- article number 138.

\bibitem{DS$_1$} {\it Довгопятий~О.П., Севостьянов~Є.О.}
Про існування розв'язків квазілінійних рівнянь Бельтрамі з двома
характеристиками // Український математичний журнал. -- 2022. --
\textbf{74}, №~7. -- С.~961--972; translation {\it Dovhopiatyi~O.P.,
Sevost'yanov~E.A.} On the Existence of Solutions of Quasilinear
Beltrami Equations with Two Characteristics // Ukrainian
Mathematical Journal. -- 2022. -- \textbf{74}, no.~7. --
P.~1099--1112.

\bibitem{DS$_2$} {\it Dovhopiatyi~O.P., Sevost'yanov~E.A.} On Beltrami equations
with inverse conditions and hydrodynamic normalization // Acta
Mathematica Hungarica. -- 2023. -- \textbf{170.} -- P.~244--260.

\bibitem{MRV$_1$} {\it Martio~O., Rickman~S., and
V\"{a}is\"{a}l\"{a}~J.} Definitions for quasiregular mappings //
Ann. Acad. Sci. Fenn. Ser. A1. -- 1969. -- \textbf{448}. --
P.~1--40.

\bibitem{Va} {\it V\"{a}is\"{a}l\"{a}~J.} Lectures on $n$-Dimensional Quasiconformal
Mappings. --  Lecture Notes in Math. 229, Berlin etc.:
Springer--Verlag, 1971.

\bibitem{RS$_1$} {\it Ryazanov~V.I., Sevost'yanov~E.A.} On convergence and
 compactness of spatial homeomorphisms // Rev. Roum. Math. Pures Appl.
 -- 2013. -- V. \textbf{58}, no.~1. -- P.~85--104.

\bibitem{MM} {\it Maly~J. and Martio~O.} Lusin's condition $N$ and mappings of the
class $W_{loc}^{1,n}$ // J. Reine Angew. Math. -- 1995. --
V.~\textbf{458}. -- P.~19--36.

\bibitem{Pon} {\it Ponomarev S.P.} The $N^{-1}$-property of mappings, and Lusin’s (N) condition. //
Math. Notes. -- 1995. -- V.~\textbf{58}. -- P.~960–-965.

\bibitem{Sev$_2$} {\it Sevost'yanov E.A.} Mappings with Direct and Inverse Poletsky Inequalities. Developments in Mathematics
(DEVM, volume 78). -- Cham: Springer Nature Switzerland AG, 2023,
433 с., ISBN 978-3-031-45417-2, ISBN e-book: 978-3-031-45418-9.

\bibitem{RSSY} {\it Ryazanov~V., Salimov~R., Srebro~U., Yakubov~E.}
On Boundary Value Problems for the Beltrami Equations //
Contemporary Mathematics. -- 2013. -- V.~\textbf{591}. --
P.~211--242.

\bibitem{Zi$_1$} {\it Ziemer~W.P.} Extremal length and conformal
capacity // Trans. Amer. Math. Soc. -- 1967. -- V.~\textbf{126},
no.~3. -- P.~460--473.

\bibitem{Zi$_2$} {\it Ziemer~W.P.} Extremal length and
$p$-capacity // Michigan Math. J. -- 1969. -- V.~\textbf{16}. --
P.~43--51.

\bibitem{Hes} {\it Hesse~J.} A $p-$extremal length and $p$-capacity
equality // Ark. Mat. -- 1975. -- V.~\textbf{13}. -- P.~131--144.

\bibitem{Shl} {\it Shlyk V.A.} The equality between $p$-capacity and
$p$-modulus // Siberian Mathematical Journal. -- 1993. --
V.~\textbf{34}, no.~6. -- P.~1196-–1200.

\bibitem{Sa} {\it Saks~S.} Theory of the Integral. -- New York: Dover, 1964.

\bibitem{SSD} {\it Севостьянов~Є.О., Скворцов~С.О., Довгопятий~О.П.} Про негомеоморфні
відображеня з оберненою нерівністю Полецького // Укр. мат. вісник.
-- 2020. -- \textbf{17,} № 3. -- С.~414--436; translation {\it
Sevost’yanov~E.A., Skvortsov~S.A., Dovhopiatyi~O.P.} On
nonhomeomorphic mappings with the inverse Poletsky inequality //
Journal of Mathematical Sciences. -- 2021. -- \textbf{252}, no.~4.
-- P.~541--557.

\bibitem{RS} {\it Ryazanov~V., Sevost'yanov~E.} Toward the theory of ring
$Q$-homeomorphisms // Israel J. Math. -- 2008. -- \textbf{168}. --
P.~101--118.}



\end{thebibliography}
\end{document}